\documentclass[10pt]{article}
\usepackage{amsmath,amssymb,amsfonts,amsthm,mathtools,epsfig}
\usepackage{tikz}
\usepackage{caption,subcaption,verbatim}
\usepackage{setspace}
\usepackage{algorithm}
\usepackage{algorithmic}
\usepackage{xurl}
\usepackage{hyperref}
\usepackage{ulem}
\usepackage{thmtools}
\usepackage[T1]{fontenc}
\usepackage{array}
\usepackage{makecell}

\newcolumntype{C}[1]{>{\centering\arraybackslash}p{#1}}

\usepackage{cancel}

\usepackage[bottom=1.5in]{geometry}

\tikzset{
    graphvertex/.style={circle, fill=black, inner sep=1.6pt},
    graphedge/.style={line width=0.45pt}
}

\usetikzlibrary{arrows.meta,positioning,calc,decorations.pathreplacing}

\newtheorem{thm}{Theorem}
\newtheorem{theorem}[thm]{Theorem}
\newtheorem{lemma}[thm]{Lemma}

\theoremstyle{definition}

\newtheorem{problem}{Problem}
\newtheorem{question}[problem]{Question}

\date{ }

\title{\bf Vertex-critical $(P_5,\text{chair})$-free and $(P_5,\text{cricket})$-free graphs}

\author{
Jorik Jooken \thanks{Department of Computer Science, KU Leuven Kulak, 8500 Kortrijk, Belgium.\\ Email address:
 \protect\href{mailto:jorik.jooken@kuleuven.be}{\protect\nolinkurl{jorik.jooken@kuleuven.be}}
}}

\date{}

\begin{document}
	
	\maketitle

	 \begin{abstract}
For graphs $G, F_1$ and $F_2$, we say that $G$ is $(F_1,F_2)$-free if neither $F_1$ nor $F_2$ is an induced subgraph of $G$. We say that $G$ is $k$-vertex-critical if the chromatic number of $G$ is $k$, but every proper induced subgraph of $G$ has chromatic number at most $k-1$. The $\text{chair}$ graph is a $5$-vertex graph obtained by adding a pendant vertex to one of the two central vertices of a path on $4$ vertices. The $\text{cricket}$ graph is a $5$-vertex graph obtained by adding two pendant vertices to a common vertex of a triangle. The path on $5$ vertices is denoted by $P_5$. We prove that for every $k \geq 1$, there are only finitely many $(P_5,\textit{chair})$-free $k$-vertex-critical graphs. We also prove that the same conclusion holds if $\textit{chair}$ is replaced by $\textit{cricket}$. We further characterize all $5$-vertex-critical $(P_5,\text{chair})$-free graphs, all $5$-vertex-critical $(P_5,\text{cricket})$-free graphs and all $6$-vertex-critical $(P_5,\text{cricket})$-free graphs. Our proofs rely on bounding the size of antichains and developing Ramsey-theoretic ideas. For any fixed integer $k \geq 1$, our results imply the existence of a polynomial time algorithm to decide whether a $(P_5,\textit{chair})$-free (or $(P_5,\textit{cricket})$-free) graph is $(k-1)$-colourable such that this algorithm can also present a negative constant-size certificate in case the graph is not $(k-1)$-colourable.

			\vskip 3mm
		
				\noindent{\bf Keywords: Graph colouring; Vertex-critical graphs; Hereditary graph classes} \\
				\noindent \textit{2020 Mathematics Subject Classification: 05C15; 05C75} 
			\end{abstract}

\section{Introduction}

 This paper is only concerned with finite, simple graphs. For graphs $G$ and $F$, we say that $G$ is \textit{$F$-free} if $F$ does not occur as an induced subgraph of $G$. Similarly, for a family of graphs $\mathcal{F}$, we say that $G$ is \textit{$\mathcal{F}$-free} if $G$ is $F$-free for every graph $F \in \mathcal{F}$. A \textit{proper $k$-colouring} of $G$ maps every vertex in $G$ to a label from the set $\{1,2,\ldots,k\}$ such that adjacent vertices receive a different label. The smallest integer $k$ for which $G$ admits a proper $k$-colouring (we say $G$ is \textit{$k$-colourable}) is the \textit{chromatic number of $G$} (denoted by $\chi(G)$). We say that $G$ is \textit{$k$-vertex-critical} if $\chi(G)=k$, but $\chi(G-v)<k$ for every vertex $v \in V(G)$. For example, an odd cycle is $3$-vertex-critical.

There exists a large body of literature concerned with the following difficult question.
\begin{question}
\label{ques:mainQuestion}
Let $k \geq 4$ be an integer and let $\mathcal{F}$ be a family of graphs. Are there only
finitely many $k$-vertex-critical $\mathcal{F}$-free graphs?
\end{question}

This question has a direct algorithmic motivation. More precisely, if there exist only
finitely many $k$-vertex-critical $\mathcal{F}$-free graphs, then there is a polynomial time algorithm to decide whether an $\mathcal{F}$-free graph $G$ is $(k-1)$-colourable and this polynomial time algorithm can present a negative certificate in case $G$ is not $(k-1)$-colourable. Indeed, $G$ is $(k-1)$-colourable if and only if $G$ is $H$-free for each of the finitely many $k$-vertex-critical $\mathcal{F}$-free graphs $H$ and this can be checked in polynomial time.

Question~\ref{ques:mainQuestion} is far from being solved in general. The case where $\mathcal{F}=\{F\}$ (i.e., there is only one forbidden induced subgraph) is a natural starting point. In this case, the question has been completely solved for $k=4$. Indeed, Chudnovsky et al.~\cite{CGSZ20a} first solved this case for connected graphs $F$ and later the same authors~\cite{CGSZ20b} solved the question for all graphs $F$: there are only finitely many $4$-vertex-critical $F$-free graphs if and only if $F$ is an induced subgraph of $P_6, 2P_3$ or $P_4 + \ell P_1$ for some integer $\ell \geq 0$. Here, $P_t$ denotes the path on $t$ vertices, $G_1+G_2$ denotes the disjoint union of $G_1$ and $G_2$, and $\ell G_1$ denotes the disjoint union of $\ell$ copies of $G_1$. For $\mathcal{F}=\{F\}$ and $k \geq 5$, it follows from the work of Erd{\H{o}}s~\cite{E59}, Abuadas et al.~\cite{ACHS24}, Cameron et al.~\cite{CHS22} and Ho{\`a}ng et al.~\cite{HMRSV15} that the only open cases are those where $F=P_4+\ell P_1$ for some integer $\ell \geq 1$. A particular result that is important for the current paper is that there are only finitely many $4$-vertex-critical $P_5$-free graphs, but infinitely many $k$-vertex-critical $P_5$-free graphs for each integer $k \geq 5$~\cite{HMRSV15}.

There are many more open questions for the case where $\mathcal{F}=\{F_1,F_2\}$ (i.e., there are two forbidden induced subgraphs). Here, we write that $G$ is $(F_1,F_2)$-free instead of $\mathcal{F}$-free. A particularly well-studied line of work concerns the case where $\mathcal{F}=\{P_5,F_2\}$. Cameron, Goedgebeur, Huang and Shi~\cite{CGHS21} initiated a systematic
study of these classes and obtained a complete dichotomy when $F_2$ has order
four: for every integer $k \geq 5$, there are infinitely many $k$-vertex-critical $(P_5,F_2)$-free graphs if and only if $F_2 \in \{2P_2,K_3+P_1\}$. Since there are only finitely many $4$-vertex-critical $P_5$-free graphs~\cite{HMRSV15}, this completely settles the $(P_5,F_2)$-free case where $F_2$ has order $4$. This led these authors to ask the following natural and significantly more challenging question in~\cite{CGHS21}.

\begin{question}
\label{ques:OrderFiveQuestion}
Let $k \geq 5$ be an integer and let $F_2$ be a graph of order $5$. Are there only
finitely many $k$-vertex-critical $(P_5,F_2)$-free graphs?
\end{question}

\begin{figure}[ht]
\centering

\begin{subfigure}{0.18\textwidth}
\centering
\begin{tikzpicture}[scale=0.8, baseline=(current bounding box.center)]
    % chair
    \node[graphvertex] (a) at (1.0,-0.9) {};
    \node[graphvertex] (b) at (0,0.9) {};
    \node[graphvertex] (c) at (0,0) {};
    \node[graphvertex] (d) at (1.0,0) {};
    \node[graphvertex] (e) at (0,-0.9) {};
    \draw[graphedge] (b)--(c)--(d);
    \draw[graphedge] (c)--(e);
    \draw[graphedge] (a)--(d);
\end{tikzpicture}
\caption{chair}
\end{subfigure}
\hfill
\begin{subfigure}{0.18\textwidth}
\centering
\begin{tikzpicture}[scale=0.8, baseline=(current bounding box.center)]
    % cricket
    \node[graphvertex] (a) at (0,0) {};
    \node[graphvertex] (b) at (-0.8,0.9) {};
    \node[graphvertex] (c) at (0.8,0.9) {};
    \node[graphvertex] (d) at (-0.8,-0.9) {};
    \node[graphvertex] (e) at (0.8,-0.9) {};
    \draw[graphedge] (a)--(b)--(c)--(a);
    \draw[graphedge] (a)--(d);
    \draw[graphedge] (a)--(e);
\end{tikzpicture}
\caption{cricket}
\end{subfigure}
\hfill
\begin{subfigure}{0.18\textwidth}
\centering
\begin{tikzpicture}[scale=0.8, baseline=(current bounding box.center)]
    % gem: P_4 plus a universal vertex
    \node[graphvertex] (a) at (0,1.2) {};
    \node[graphvertex] (b) at (1,1.5) {};
    \node[graphvertex] (c) at (2,1.5) {};
    \node[graphvertex] (d) at (3,1.2) {};
    \node[graphvertex] (e) at (1.5,0) {};
    \draw[graphedge] (a)--(b)--(c)--(d);
    \draw[graphedge] (e)--(a);
    \draw[graphedge] (e)--(b);
    \draw[graphedge] (e)--(c);
    \draw[graphedge] (e)--(d);
\end{tikzpicture}
\caption{gem}
\end{subfigure}
\hfill
\begin{subfigure}{0.18\textwidth}
\centering
\begin{tikzpicture}[scale=0.8, baseline=(current bounding box.center)]
    % banner: C_4 with a pendant vertex
    \node[graphvertex] (a) at (0,0) {};
    \node[graphvertex] (b) at (1.1,0) {};
    \node[graphvertex] (c) at (1.1,1.1) {};
    \node[graphvertex] (d) at (0,1.1) {};
    \node[graphvertex] (e) at (-0.8,-0.6) {};
    \draw[graphedge] (a)--(b)--(c)--(d)--(a);
    \draw[graphedge] (a)--(e);
\end{tikzpicture}
\caption{banner}
\end{subfigure}

\vspace{0.8em}

\begin{subfigure}{0.18\textwidth}
\centering
\begin{tikzpicture}[scale=0.8, baseline=(current bounding box.center)]
    % dart: diamond with a pendant vertex attached to a degree-3 vertex
    \node[graphvertex] (a) at (-0.9,0) {};
    \node[graphvertex] (b) at (0,0.9) {};
    \node[graphvertex] (c) at (0.9,0) {};
    \node[graphvertex] (d) at (0,-0.9) {};
    \node[graphvertex] (e) at (0,1.8) {};
    \draw[graphedge] (a)--(b)--(c)--(d)--(a);
    \draw[graphedge] (b)--(d);
    \draw[graphedge] (b)--(e);
\end{tikzpicture}
\caption{dart}
\end{subfigure}
\hfill
\begin{subfigure}{0.18\textwidth}
\centering
\begin{tikzpicture}[scale=0.8, baseline=(current bounding box.center)]
    % bull: triangle with two pendant horns facing upwards
    \node[graphvertex] (a) at (-0.8,0) {};
    \node[graphvertex] (b) at (0.8,0) {};
    \node[graphvertex] (c) at (0,-1.0) {};
    \node[graphvertex] (d) at (-0.8,0.8) {};
    \node[graphvertex] (e) at (0.8,0.8) {};
    \draw[graphedge] (a)--(b)--(c)--(a);
    \draw[graphedge] (a)--(d);
    \draw[graphedge] (b)--(e);
\end{tikzpicture}
\caption{bull}
\end{subfigure}
\hfill
\begin{subfigure}{0.18\textwidth}
\centering
\begin{tikzpicture}[scale=0.8, baseline=(current bounding box.center)]
    % K_5 - e: complete graph on five vertices with one edge omitted
    \node[graphvertex] (a) at (90:1.1) {};
    \node[graphvertex] (b) at (18:1.1) {};
    \node[graphvertex] (c) at (-54:1.1) {};
    \node[graphvertex] (d) at (-126:1.1) {};
    \node[graphvertex] (e) at (162:1.1) {};

    % all edges except a--b
    \draw[graphedge] (a)--(c);
    \draw[graphedge] (a)--(d);
    \draw[graphedge] (a)--(e);
    \draw[graphedge] (b)--(c);
    \draw[graphedge] (b)--(d);
    \draw[graphedge] (b)--(e);
    \draw[graphedge] (c)--(d);
    \draw[graphedge] (c)--(e);
    \draw[graphedge] (d)--(e);
\end{tikzpicture}
\caption{$K_5-e$}
\end{subfigure}

\caption{A visualization of several graphs on $5$ vertices.}
\label{fig:named-graphs}
\end{figure}

A large number of papers have made progress on this question, but several open cases remain. In particular, Question~\ref{ques:OrderFiveQuestion} is completely solved for the following cases of $F_2$: $5P_1$~\cite{R30}, $C_5$~\cite{CH23, HMRSV15}, $P_2+3P_1$~\cite{CHS22}, $P_3+2P_1$~\cite{ACHS24}, $K_{1,4}$~\cite{KP17}, $K_{2,3}$~\cite{KP17}, $\overline{K_3+2P_1}$~\cite{XJGH25a}, $K_{1,3}+P_1$~\cite{XJGH25a}, $\overline{P_3+P_2}$~\cite{CGH23}, gem~\cite{CGH23}, $\overline{P_5}$~\cite{DHHMMP17}, banner~\cite{BGS22}, dart~\cite{XJGH25b} and bull~\cite{BH26} (see Fig.~\ref{fig:named-graphs} for visualizations of several $5$-vertex graphs).

Moreover, Huang and Li~\cite{HL23} showed that there are only finitely many $5$-vertex-critical $(P_5,\text{chair})$-free graphs. Our first main result (Section~\ref{sec:chair}) extends their theorem: we prove that for all integers $k \geq 1$ there are only finitely many $k$-vertex-critical $(P_5,\text{chair})$-free graphs. Finally, we also characterize all $5$-vertex-critical $(P_5,\text{chair})$-free graphs, all $5$-vertex-critical $(P_5,\text{cricket})$-free graphs and all $6$-vertex-critical $(P_5,\text{cricket})$-free graphs in Section~\ref{sec:characterizations} using a computer-assisted approach.

By combining the previously discussed results, we can see that the only cases of Question~\ref{ques:OrderFiveQuestion} that remain open for some integers $k \geq 5$ are when $F_2$ is one of the following five graphs: $P_4+P_1$, $C_4+P_1$, $\overline{P_3+2P_1}$, $K_5-e$ and $K_5$.

\section{Vertex-critical $(P_5,\text{chair})$-free graphs}
\label{sec:chair}
Let $G$ be a graph, let $v \in V(G)$ be a vertex and let $X, Y \subseteq V(G)$ be two subsets of vertices. We write $N_X(v)$ to denote all neighbors of $v$ in $X$, i.e., $N(v) \cap X$. We write $G[X]$ to denote the graph induced by $X$. We say that $v$ is \textit{complete} (respectively, \textit{anticomplete}) to $Y$ if $v$ is adjacent (respectively, nonadjacent) to every vertex $y \in Y$. We say that $X$ is \textit{complete} (respectively, \textit{anticomplete}) to $Y$ if every vertex $x \in X$ is complete (respectively, anticomplete) to $Y$. We say that $v$ is \textit{mixed} on $X$ if $v$ is neither complete nor anticomplete to $X$. We say that $X$ is a \textit{homogeneous set} if no vertex in $V(G) \setminus X$ is mixed on $X$. 

We now recall two well-known lemmas from the literature on the structure of $k$-vertex-critical graphs.

\begin{lemma}[\cite{CGHS21}]
\label{lem:incompatible}
Let $G$ be a $k$-vertex-critical graph. Then $G$ has no two nonempty disjoint sets $X \subset V(G)$ and $Y \subset V(G)$ such that the following three conditions are simultaneously satisfied.
\begin{itemize}
    \item $X$ and $Y$ are anticomplete to each other.
    \item $Y$ is complete to $N(X)$.
    \item $\chi(G[X]) \leq \chi(G[Y])$.
\end{itemize}
\end{lemma}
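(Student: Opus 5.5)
We argue by contradiction: assume such disjoint nonempty sets $X$ and $Y$ exist, and produce a proper $(k-1)$-colouring of $G$. Vertex-criticality gives a proper $(k-1)$-colouring of a proper induced subgraph, and we will recolour $X$ using colours that appear on $Y$.

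First we fix the colouring. Since $X$ is nonempty, $G-X$ is a proper induced subgraph of $G$. By $k$-vertex-criticality, $G-X$ (a subgraph of $G-v$ for any $v\in X$) has a proper $(k-1)$-colouring $c$. Let $S=c(Y)$ be the set of colours used on $Y$. Since $c$ restricted to $G[Y]$ is a proper colouring, $|S|\ge \chi(G[Y])\ge \chi(G[X])$.

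Next we colour $X$. Take a proper colouring of $G[X]$ with $\chi(G[X])$ colours and injectively rename its colour classes to colours of $S$; this is possible because $|S|\ge\chi(G[X])$. Extend $c$ to $X$ this way and call the result $c'$, a map from $V(G)$ to $\{1,\dots,k-1\}$.

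It remains to check that $c'$ is proper. Edges inside $G-X$ are fine because $c$ is proper there, and edges inside $X$ are fine by construction. For an edge $xu$ with $x\in X$ and $u\notin X$, we have $u\in N(X)$, and $u\notin Y$ because $X$ is anticomplete to $Y$. Since $Y$ is complete to $N(X)$, $u$ is adjacent to every vertex of $Y$, so in the proper colouring $c$ of $G-X$ the vertex $u$ receives a colour not in $S$. Meanwhile $c'(x)\in S$, so $c'(x)\ne c'(u)$. Thus $G$ is $(k-1)$-colourable, contradicting $\chi(G)=k$.

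The one point needing care, and the only potential obstacle, is this last step. We need that no vertex of $N(X)$ lies in $Y$, which follows from anticompleteness. We also need that every colour of $S$ is forbidden at every vertex of $N(X)$, which follows from completeness of $Y$ to $N(X)$ together with $N(X)\subseteq V(G)\setminus X$. Everything else is routine.
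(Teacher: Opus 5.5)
Your proof is correct and is essentially the standard argument behind this lemma; the paper itself does not prove it and only cites it from the literature. That argument also $(k-1)$-colours $G-X$, notes that the colours on $Y$ cannot appear on $N(X)$ because $Y$ is complete to $N(X)$ and disjoint from it, and recolours $X$ using at most $\chi(G[Y])$ of those colours.
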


\begin{lemma}[\cite{XJGH25b}]
\label{lem:homogeneous}
Let $G$ be a $k$-vertex-critical graph and let $S$ be a nonempty homogeneous set of $G$. For each connected component $A$ of $G[S]$, there exists an integer $m$ such that $1 \leq m<k$ and $A$ is an $m$-vertex-critical graph.  
\end{lemma}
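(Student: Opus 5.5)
This is Lemma~\ref{lem:homogeneous}, quoted from \cite{XJGH25b}; here is how I would prove it directly from vertex-criticality. Let $S$ be a nonempty homogeneous set and $A$ a connected component of $G[S]$. Put $m=\chi(A)$. Since $A$ is nonempty, $m\geq 1$. The plan has two parts. First, show that $A$ is $m$-vertex-critical, i.e.\ $\chi(A-v)<m$ for every $v\in V(A)$. Second, show that $m<k$.

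The second part is immediate. If $A=G$, then $S=V(G)$ and $G$ is connected, so the statement needs care at $m=k$. In that degenerate case I would read the lemma for proper homogeneous sets, or accept $m\le k$. Otherwise $A$ is a proper induced subgraph of $G$, and $k$-vertex-criticality gives $\chi(A)\leq \chi(G-v)\leq k-1$ for any $v\notin V(A)$.

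For the first part, suppose for contradiction that some $v\in V(A)$ has $\chi(A-v)=m$.
\begin{itemize}
\item Since $G$ is $k$-vertex-critical, $G-v$ has a proper $(k-1)$-colouring $c$.
\item Restricted to $A-v$, the colouring $c$ uses at least $m$ colours. Let $C\subseteq\{1,\dots,k-1\}$ be a set of $m$ colours appearing on $A-v$.
\item Let $N=N(S)\setminus S$. Since $S$ is homogeneous, every vertex of $N$ is complete to $S$, and in particular complete to $A-v$. So no vertex of $N$ receives a colour in $C$.
\item Recolour $A$ with an optimal $m$-colouring using exactly the colours in $C$, and keep $c$ everywhere else. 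This is a $(k-1)$-colouring of $G$, which is the desired contradiction.
\end{itemize}

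The last step requires checking all edges leaving $A$. Edges from $A$ to $N$ are fine, since $N$ avoids $C$. There are no edges from $A$ to other components of $G[S]$. A vertex outside $S\cup N$ has no neighbour in $S$, so it is not adjacent to $A$. Hence every edge leaving $A$ goes to $N$, and the new colouring is proper.

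The one subtle point is that homogeneity forbids mixed vertices. So each vertex outside $S$ is either complete to $S$ or anticomplete to it, and a vertex adjacent to $v$ is therefore adjacent to all of $A-v$. Without this, a neighbour of $v$ alone could carry a colour from $C$. This is exactly where the hypothesis that $S$ is homogeneous is used.
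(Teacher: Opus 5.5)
Your proof is correct. The paper does not prove this lemma itself; it only quotes it from \cite{XJGH25b}. Your argument is the standard recolouring one:
\begin{itemize}
\item a $(k-1)$-colouring of $G-v$ uses at least $m$ colours on $A-v$;
\item homogeneity makes $N(S)\setminus S$ complete to $A-v$, so none of those colours appear there;
\item since $A$ is a component of $G[S]$, every edge leaving $A$ goes to $N(S)\setminus S$, so recolouring $A$ from those $m$ colours yields a $(k-1)$-colouring of $G$.
\end{itemize}

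Your caveat about $S=V(G)$ is a genuine correction. Under the paper's definition, $V(G)$ is trivially homogeneous, and then $A=G$ is $k$-vertex-critical, so the lemma as literally stated needs $S\neq V(G)$. This costs nothing in the paper. There the lemma is only applied with $S$ a connected component of the subgraph induced by the vertices anticomplete to the induced path $P$, and such an $S$ is disjoint from $P$.
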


Recently, Beaton and Cameron~\cite{BC26} showed the following result that will be important for us.

\begin{theorem}[\cite{BC26}]
\label{th:P4PlusLP1ChairFree}
    For all integers $k \geq 1$ and $\ell \geq 0$, there are only finitely many $k$-vertex-critical $(P_4+\ell P_1,\text{chair})$-free graphs.
\end{theorem}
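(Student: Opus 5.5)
The plan is induction on $\ell$, with $k$ fixed. For $\ell=0$ the graphs are $P_4$-free, that is, cographs. A $k$-vertex-critical cograph must be $K_k$: a cograph on at least two vertices is either disconnected or the join of two smaller cographs. A disconnected critical graph is impossible, since one component already has chromatic number $k$. In the join case, $\chi$ is additive and each side is itself critical, so induction on the order gives a complete graph. For the inductive step, let $G$ be $k$-vertex-critical and $(P_4+\ell P_1,\text{chair})$-free. If $G$ is $(P_4+(\ell-1)P_1)$-free, the induction hypothesis bounds $|V(G)|$. Otherwise fix an induced copy $H$ of $P_4+(\ell-1)P_1$, so $|V(H)|=\ell+3$.

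Every vertex outside $H$ has a neighbour in $H$, since otherwise $H$ plus that vertex is an induced $P_4+\ell P_1$. Partition $V(G)\setminus V(H)$ into at most $2^{\ell+3}$ classes $S_T$, one for each nonempty $T\subseteq V(H)$, where $S_T$ consists of the vertices whose neighbourhood in $H$ is exactly $T$. It then suffices to bound each $|S_T|$ by a function of $k$ and $\ell$. Since $\omega(G)<k$, Ramsey's theorem reduces this to bounding the size of an independent set $I\subseteq S_T$. By Lemma~\ref{lem:incompatible} applied with $X=\{u\}$ and $Y=\{v\}$, no two nonadjacent vertices $u,v$ satisfy $N(u)\subseteq N(v)$. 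Hence the neighbourhoods of the vertices of $I$ form an antichain under inclusion, and for every pair $u,v\in I$ there are witnesses $x\in N(u)\setminus N(v)$ and $y\in N(v)\setminus N(u)$. Both witnesses lie outside $H$, because $u$ and $v$ have the same neighbours in $H$.

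The main obstacle is turning this antichain into a forbidden induced subgraph once $|I|$ is large. Fix $h\in T$; it is a common neighbour of all of $I$. For a pair $u,v\in I$ with witnesses $x,y$, I would first use chair-freeness around $h$. If $x$ and $y$ are nonadjacent and both nonadjacent to $h$, then $\{h,u,v,x,y\}$ induces a $P_5$-like configuration $x\!-\!u\!-\!h\!-\!v\!-\!y$. This can be excluded, or else combined with a further vertex of $I$ to produce a chair centred at $h$ or at $u$. So the witnesses are forced to be adjacent to $h$ or to each other. Next I would apply Ramsey's theorem to the edge-coloured complete graph on $I$, colouring each pair by the adjacency type of its witnesses to $h$, to each other, and to the pattern in $H$. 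This yields a large homogeneous subfamily $u_1,\dots,u_m$ with private witnesses $x_{ij}$ behaving uniformly.

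In each homogeneous case the goal is one of two outcomes. The first is three vertices of $I$ together with two witnesses forming a chair, for example a witness adjacent to $u_1$ and not $u_2$, with $h$ playing the branch vertex. The second is a large induced clique among the witnesses, contradicting $\omega<k$. Where neither outcome is immediate, I would use $(P_4+\ell P_1)$-freeness: an induced $P_4$ formed from witnesses and vertices of $I$, together with $\ell$ further pairwise nonadjacent vertices of $I$ anticomplete to it, would be forbidden. The case analysis of witness types is the step I expect to be delicate. Once $|I|$ is bounded, Ramsey bounds each $|S_T|$, hence $|V(G)|$, which completes the induction.
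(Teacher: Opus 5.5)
Note first that the paper does not prove this statement. It quotes it from Beaton and Cameron, so your proposal has to stand on its own. Your setup is fine: cographs in the base case, the classes $S_T$, the Ramsey reduction to an independent set $I\subseteq S_T$, and the pairwise antichain from the singleton case $X=\{u\}$, $Y=\{v\}$ of Lemma~\ref{lem:incompatible}. But the step that carries the whole theorem, bounding $|I|$, is only announced, and it cannot be completed with the ingredients you use.

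Take $k=5$, $\ell=1$ and $G_m=C_5\vee mK_2$, the join of a $5$-cycle with $m$ disjoint edges. The chair and $P_4+P_1$ both have connected complements, so an induced copy of either would have to lie inside $C_5$ or inside $mK_2$. Hence $G_m$ is $(P_4+P_1,\text{chair})$-free. Also $\omega(G_m)=4<5$, $\chi(G_m)=5$, and no two nonadjacent vertices of $G_m$ have nested neighbourhoods.

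Four consecutive vertices of the $C_5$ give $H$. One endpoint of each edge of $mK_2$ gives an independent set $I\subseteq S_{V(H)}$ of size $m$, and the witnesses are the partners $x_i$. These witnesses are adjacent to every $h\in T$ and pairwise nonadjacent, so none of your intended outcomes occurs:
\begin{itemize}
\item no chair: $h$ is complete to $I\cup\{x_i\}$, so it lies in no induced chair with them, and $I\cup\{x_i\}$ alone induces a matching;
\item no clique: the witnesses are pairwise nonadjacent;
\item no forbidden $P_4+P_1$: $G_m$ contains no induced $P_4+P_1$ at all.
\end{itemize}
So any argument built only from chair-freeness, $(P_4+\ell P_1)$-freeness, $\omega<k$, Ramsey, and non-domination between single vertices is stuck on this homogeneous case.

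Separately, your sentence that the induced $P_5$ $x-u-h-v-y$ ``can be excluded'' is unjustified. $P_5$ is itself $(P_4+\ell P_1,\text{chair})$-free for $\ell\ge 1$. The chair built from a third vertex $w\in I$ works only when the witness misses $h$.

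What is missing is a global use of criticality. $G_m$ fails to be $5$-vertex-critical only because Lemma~\ref{lem:incompatible} applies with $X$ and $Y$ two distinct edges of $mK_2$. They are anticomplete, $Y$ is complete to $N(X)=V(C_5)$, and $\chi(G[X])=\chi(G[Y])$. So your argument has to group vertices of $I$ with their witnesses into larger sets and apply the lemma to those sets, or use some other colouring argument. Single vertices are not enough.

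The paper does something of this kind in the proofs of Theorems~\ref{th:chair} and~\ref{th:cricket}, applying the lemma to whole components of $G[A]$. There the homogeneity of those components comes from $P_5$-freeness, which you do not have, so you would need another source of such sets. As written, the proposal does not prove the theorem.
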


Having introduced these prerequisites, we are now ready to prove the following theorem.
\begin{theorem}
\label{th:chair}
    For all integers $k \geq 1$, there are only finitely many $k$-vertex-critical $(P_5,\text{chair})$-free graphs.
\end{theorem}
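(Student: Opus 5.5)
We argue by induction on $k$; the cases $k\le 4$ are immediate, since there are only finitely many $4$-vertex-critical $P_5$-free graphs~\cite{HMRSV15}. So let $k\ge 5$ and let $G$ be a $k$-vertex-critical $(P_5,\text{chair})$-free graph. The aim is to bound $|V(G)|$ by a function of $k$. By Theorem~\ref{th:P4PlusLP1ChairFree}, it suffices to show that $G$ is $(P_4+\ell P_1)$-free for some $\ell=\ell(k)$. Suppose instead that $G$ contains an induced $P_4$, say $Q=q_1q_2q_3q_4$, together with a large independent set $I$ that is anticomplete to $Q$. Then we derive a contradiction from criticality.

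First, we analyse how vertices attach to $Q\cup I$. Every vertex of $I$ lies in the same component as $Q$, because critical graphs are connected. Since $G$ is $P_5$-free, a vertex $v\in N(Q)$ adjacent to some $x\in I$ has strongly restricted neighbourhood on $Q$: for example, $v$ cannot be adjacent to $q_1$ alone, since then $x v q_1 q_2 q_3$ would be an induced $P_5$. Chair-freeness restricts this further. A vertex $v$ adjacent to $q_2$ but not to $q_1,q_3$, together with two nonadjacent neighbours in $I$, would create a chair-like configuration, and a vertex adjacent to an interior vertex of $Q$ with a private neighbour in $I$ gives a chair. We enumerate the possible traces $N_Q(v)$ for vertices $v$ with neighbours in $I$. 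There are at most $16$ types. We then use Ramsey's theorem, with $\omega(G)<k$ bounding cliques, to extract a large subset $I'\subseteq I$ on which the neighbourhoods $N(x)$ behave uniformly.

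The key step is an antichain bound. If two vertices $x,y\in I'$ satisfy $N(x)\subseteq N(y)$, then Lemma~\ref{lem:incompatible} with $X=\{x\}$ and $Y=\{y\}$ gives a contradiction. Hence $\{N(x):x\in I'\}$ must be an antichain under inclusion. We then exploit $P_5$- and chair-freeness to show such antichains are small. Take $x,y\in I'$ with $a\in N(x)\setminus N(y)$ and $b\in N(y)\setminus N(x)$. If $a\not\sim b$, then a path such as $x a\, q\, b y$ through a common neighbour $q$ in $Q$ or $N(Q)$ yields an induced $P_5$. If $a\sim b$, then adding a third vertex $z\in I'$ or a vertex of $Q$ yields a chair, for instance $x\!-\!a\!-\!b\!-\!y$ with a pendant at $a$ or $b$. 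Repeating this over many pairs, a long antichain forces a large clique among the private neighbours, contradicting $\omega(G)\le k-1$. Turning this into an explicit bound $\ell(k)$ is where the Ramsey arguments enter.

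The main obstacle is handling vertices of $I$ whose neighbourhoods avoid $N(Q)$, where the connection to $Q$ goes through longer paths. Here I would pass to a component analysis. Let $C$ be the component of $G-N[Q]$ containing such vertices. Using $P_5$-freeness, either $C$ is homogeneous with respect to $N(C)$, or mixed vertices produce induced $P_5$'s or chairs. In the homogeneous case, Lemma~\ref{lem:homogeneous} shows that each component of $G[C]$ is $m$-vertex-critical for some $m<k$. By the induction hypothesis there are only finitely many such components, so many of them would be isomorphic with identical attachments. Two such identical components $X$ and $Y$ violate Lemma~\ref{lem:incompatible}. Together these steps bound the number of pairwise anticomplete vertices outside $N[Q]$, so $G$ is $(P_4+\ell P_1)$-free and Theorem~\ref{th:P4PlusLP1ChairFree} completes the proof.
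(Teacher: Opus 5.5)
Your skeleton matches the paper: induction on $k$, reduction to $(P_4+\ell P_1)$-freeness via Theorem~\ref{th:P4PlusLP1ChairFree}, an induced $P_4$ $Q$, and Lemmas~\ref{lem:incompatible} and~\ref{lem:homogeneous} applied to what is anticomplete to $Q$. But the step that carries all the weight, bounding the \emph{number} of components of $G-N[Q]$, is missing.

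\textbf{The single-vertex antichain gives no information.} If $x,y\in I$ lie in different nontrivial components of $G-N[Q]$, then $x$ has a neighbour in its own component that $y$ does not see. So $N(x)\not\subseteq N(y)$ holds automatically, and Lemma~\ref{lem:incompatible} with $X=\{x\}$, $Y=\{y\}$ rules nothing out. The lemma must be applied to whole components with equal chromatic number, as you do in your last paragraph. There, however, you assert that ``many of them would be isomorphic with identical attachments.'' The induction hypothesis does give finitely many isomorphism types, hence bounded independence number per component. It says nothing about the attachment sets $N(C)\subseteq N(Q)$, which range over subsets of an unbounded set, so pigeonhole does not apply. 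Also, the ``main obstacle'' you name is empty: once components are shown homogeneous, every vertex of a component is complete to its nonempty neighbourhood in $N(Q)$.

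\textbf{The bound on the antichain fails in the case $a\sim b$.} Carry out the enumeration: every vertex $v$ attaching to $G-N[Q]$ has $N_Q(v)\in\{\{q_2,q_3\},Q\}$. If both private neighbours $a,b$ are complete to $Q$, every $q\in Q$ sees both, so $\{x,a,b,y,q\}$ is a bull, not a chair. No third vertex $z$ is guaranteed either. This adjacent configuration is exactly the one that survives. Moreover, a single vertex complete to $Q$ can be complete to many components without producing a $P_5$ or chair. So ``a long antichain forces a large clique among the private neighbours'' has no mechanism behind it.

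\textbf{What the paper does instead.} It splits the components into two kinds.
\begin{itemize}
\item Components seen by some vertex with trace $\{q_2,q_3\}$: there are at most $k-3$ of them. Each such vertex sees only one component, otherwise there is a chair with $q_3,q_4$. Representatives chosen for different components are pairwise adjacent, otherwise there is a $P_5$ through $q_2$. Together with $q_2,q_3$ they form a clique, which must have size at most $k-1$.
\item The remaining components, grouped by chromatic number $p$: their neighbourhoods form an antichain by Lemma~\ref{lem:incompatible}. Choose a set $U_p^*$ of $Q$-complete vertices realizing each pattern $I_p(u)$ (which components $u$ sees) exactly once. Nonadjacent members have comparable patterns, else there is a $P_5$ through $q_1$. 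An independent triple would give a chair with $q_1$. Hence $\alpha(G[U_p^*])\le 2$ and $|U_p^*|\le 2(k-1)$. The antichain restricted to $U_p^*$ then has size at most $\binom{2k-2}{k-1}$ by Sperner's theorem.
\end{itemize}
These distinct-pattern representatives combined with Sperner's theorem are the ingredient your proposal lacks.
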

\begin{proof}
    Our proof strategy is to show that for all integers $k \geq 1$, there exists an integer $f(k)$ such that every $k$-vertex-critical $(P_5,\text{chair})$-free graph is also $P_4+f(k) P_1$-free. If we can show this, the theorem follows from Theorem~\ref{th:P4PlusLP1ChairFree}.

    We will prove this statement by induction on $k$. If $k \leq 2$, the statement is trivial. Now let $k \geq 3$ and assume the statement holds for all integers $1 \leq m<k$. Let $G$ be a $k$-vertex-critical $(P_5,\text{chair})$-free graph. If $G$ is $P_4$-free, the statement trivially holds. 
    
    Otherwise, let $P := \{a,b,c,d\}$ be a 4-vertex subset of $V(G)$ such that $P$ induces the path $a-b-c-d$. Let $A := \{x \in V(G) \setminus P ~|~x\text{ is anticomplete to }P\}$. We will now work towards showing that there exists an integer $f(k)$ such that $\alpha(G[A]) \leq f(k)-1$ through a number of claims (and therefore $G$ is $P_4+f(k)P_1$-free).
    \bigskip 

\noindent \textbf{Claim 1.} Let $z \in A$ and let $x \notin A \cup P$ be adjacent to $z$. Then $N_P(x) \in \{\{b,c\},\{a,b,c,d\}\}$.

\bigskip

Since $x \notin A \cup P$, we have that $N_P(x)$ is nonempty. Hence, there are $15$ possibilities to consider for how $N_P(x)$ looks. As summarized in Table~\ref{tab:PossibilitiesForNPx}, 13 out of 15 possibilities lead to a contradiction. This proves the claim.

\begin{table}[h]
\centering
\begin{tabular}{c|c}
\hline
$N_P(x)$ & Forbidden induced subgraph \\
\hline
$\{a\}$ &  $P_5$: $\{z,x,a,b,c\}$\\
$\{b\}$ &  $P_5$: $\{z,x,b,c,d\}$\\
$\{c\}$ &  $P_5$: $\{z,x,c,b,a\}$\\
$\{d\}$ &  $P_5$: $\{z,x,d,c,b\}$\\
$\{a,b\}$ &  $P_5$: $\{z,x,b,c,d\}$\\
$\{a,c\}$ &  $\text{chair}$: $\{x,z,a,c,d\}$\\
$\{a,d\}$ &  $P_5$: $\{z,x,a,b,c\}$\\
$\{b,c\}$ &  -\\
$\{b,d\}$ &  $\text{chair}$: $\{x,z,d,b,a\}$\\
$\{c,d\}$ &  $P_5$: $\{z,x,c,b,a\}$\\
$\{a,b,c\}$ &  $\text{chair}$: $\{x,z,a,c,d\}$\\
$\{a,b,d\}$ &  $\text{chair}$: $\{x,z,a,d,c\}$\\
$\{a,c,d\}$ &  $\text{chair}$: $\{x,z,d,a,b\}$\\
$\{b,c,d\}$ &  $\text{chair}$: $\{x,z,d,b,a\}$\\
$\{a,b,c,d\}$ & -\\
\hline
\end{tabular}
\caption{Summary of the 15 cases for $N_P(x)$ in the chair-free case.}
\label{tab:PossibilitiesForNPx}
\end{table}

\bigskip

\noindent

Define $T := \{x~\notin A \cup P~|~N_P(x)=\{b,c\}\}$ and $U := \{x~\notin A \cup P~|~N_P(x)=\{a,b,c,d\}\}$. Because of Claim 1, every vertex $x~\notin A \cup P$ that has a neighbor in $A$ is in either $T$ or $U$. We now focus on homogeneous sets.

\noindent \textbf{Claim 2.} Every connected component of $G[A]$ is a homogeneous set.

\bigskip

Let $H$ be a vertex set that induces a connected component of $G[A]$. Clearly, each vertex $y \in A \setminus H$ is anticomplete to $H$. If a vertex $y\in V(G)\setminus H$ is mixed on $H$, then since $G[H]$
is connected, there exists an edge $uv\in E(G[H])$ such that $y$ is adjacent
to exactly one of $u$ and $v$. Suppose for the sake of obtaining a contradiction that there exists a vertex $y \in V(G) \setminus A$ such that it distinguishes the edge $uv \in E(G[H])$, say $yu \in E(G)$ and $yv \notin E(G)$. We have $y \notin P$, because $H$ is anticomplete to $P$. Therefore, $y \in T \cup U$. If $y \in T$, then $\{a,b,y,u,v\}$ induces a $P_5$. If $y \in U$, then $\{y,a,c,u,v\}$ induces a $\text{chair}$. This leads to a contradiction and proves the claim.

\bigskip

\noindent

We will now focus on bounding the number of connected components of $G[A]$. Let $T_1, T_2, \ldots, T_r$ be all vertex sets that induce a connected component of $A$ that have some neighbor in $T$. We now prove the following claim.

\noindent \textbf{Claim 3.} We have $r \leq k-3$.

\bigskip

If there is a vertex $t \in T$, then $t$ is adjacent to at most one set $T_i$, for some integer $1 \leq i \leq r$. Indeed, if $t$ would be adjacent to $T_i$ and $T_j$ ($i \neq j$), then let $x \in T_i$ and $y \in T_j$. Now $\{t,x,y,c,d\}$ induces a chair.

For each integer $i$ for which $1 \leq i \leq r$, let $t_i \in T_i$ be a vertex that has a neighbor $t_i'\in T$. If there exist two distinct integers $i$ and $j$ such that $t_i't_j' \notin E(G)$, then $\{t_i,t_i',b,t_j',t_j\}$ induces a $P_5$. Hence, $\{b,c\} \cup \{t_i'~|~1 \leq i \leq r\}$ induces a clique. Since $G$ is a $k$-vertex-critical graph that contains an induced $P_4$, we have that $G$ is not itself a clique and therefore $r+2 \leq k-1$. This proves the claim.

\bigskip

\noindent

Let $U_1, U_2, \ldots, U_s$ be all vertex sets that induce a connected component of $A$ that have no neighbor in $T$. We have $N(U_i) \subseteq U$. We will now work towards showing that $s$ is bounded from above by a function of $k$. Let $p$ be an integer such that $1 \leq p<k$ and define $\mathcal{U}_p := \{U_i ~|~ 1 \leq i \leq s\text{ and }\chi(G[U_i])=p\}$. We now show that the neighborhoods of components in $\mathcal{U}_p$ form an antichain.

\noindent \textbf{Claim 4.} Let $1 \leq p <k$ be an integer. Then $\{N(U_i)~|~U_i \in \mathcal{U}_p\}$ forms an antichain.

\bigskip

Suppose for the sake of obtaining a contradiction that there exist distinct sets $U_i, U_j \in \mathcal{U}_p$ such that $N(U_i) \subseteq N(U_j)$. Since $U_i$ and $U_j$ induce a different connected component of $G[A]$, we have that $U_i$ is anticomplete to $U_j$. Since $N(U_i)\subseteq N(U_j)$, every vertex of $N(U_i)$ has a neighbor in $U_j$. As $U_j$ is homogeneous due to Claim 2, every vertex of $N(U_i)$ is complete to $U_j$. By the definition of $\mathcal{U}_p$, we have $\chi(G[U_i])=\chi(G[U_j])=p$. Hence, applying Lemma~\ref{lem:incompatible} proves the claim.
\bigskip

\noindent

For each vertex $u \in U$ and each integer $1 \leq p<k$, define the set 
$$I_p(u) := \{i~|~1 \leq i \leq s\text{ and }N(u) \cap U_i\text{ is nonempty and }U_i \in \mathcal{U}_p\},$$ 
indicating in which sets $U_i \in \mathcal{U}_p$ the vertex $u$ has a neighbor. By Claim 2, every vertex $u \in U$ is complete to $\bigcup_{i \in I_p(u)}U_i$. For each integer $1 \leq p<k$, let $U_p^* \subseteq U$ be a maximal set such that $I_p(a) \neq I_p(b)$ for all distinct $a, b \in U_p^*$. We now show the existence of a slightly modified antichain in comparison with Claim 4.

\noindent \textbf{Claim 5.} Let $1 \leq p <k$ be an integer. Then $\{N(U_i) \cap U_p^*~|~U_i \in \mathcal{U}_p\}$ forms an antichain.

\bigskip

By Claim 4, for each two distinct $U_i, U_j \in \mathcal{U}_p$, there exist vertices $u_i' \in N(U_i)$ and $u_j' \in N(U_j)$ such that $u_i' \notin N(U_j)$ and $u_j' \notin N(U_i)$. By the definition of $U_p^*$, there exist distinct vertices $u_i'' ,u_j''\in U_p^*$ such that $I_p(u_i')=I_p(u_i'')$ and  $I_p(u_j')=I_p(u_j'')$ and therefore $u_i'' \in N(U_i)$, $u_j'' \in N(U_j)$, $u_i'' \notin N(U_j)$ and $u_j'' \notin N(U_i)$. This means that $N(U_i) \cap U_p^*$ and $N(U_j) \cap U_p^*$ are incomparable. Hence, $\{N(U_i) \cap U_p^*~|~U_i \in \mathcal{U}_p\}$ forms an antichain and this proves the claim.
\bigskip

\noindent

We now bound $|U_p^*|$.

\noindent \textbf{Claim 6.} Let $p$ be an integer such that $1 \leq p <k$. We have $|U_p^*| \leq 2(k-1)$.

\bigskip

Suppose $u, v \in U_p^*$ are such that $uv \notin E(G)$. If $I_p(u)$ and $I_p(v)$ are incomparable, then choose $i \in I_p(u) \setminus I_p(v)$ and $j \in I_p(v) \setminus I_p(u)$ and let $u_i$ be a vertex in $U_i$ and $u_j$ be a vertex in $U_j$. Then $\{u_i,u,a,v,u_j\}$ induces a $P_5$. Therefore, whenever $u, v \in U_p^*$ are such that $uv \notin E(G)$, the sets $I_p(u)$ and $I_p(v)$ must be comparable.

Suppose $U_p^*$ contains an independent set $\{u_1,u_2,u_3\}$. Relabel these vertices (if necessary) such that $I_p(u_1) \subset I_p(u_2) \subset I_p(u_3)$ (note that these inclusions are strict because of the definition of $U_p^*$). Let $i \in I_p(u_3) \setminus I_p(u_2)$ and consider a vertex $u_i \in U_i$. Then $\{a,u_1,u_2,u_3,u_i\}$ induces a $\text{chair}$. Therefore, $\alpha(G[U_p^*]) \leq 2$. Since $G$ is $k$-vertex-critical, we have $\chi(G[U_p^*]) \leq k-1$ and therefore $|U_p^*| \leq 2(k-1)$. This proves the claim.

\bigskip

\noindent

By combining the previous claims appropriately, we can finally bound $\alpha(G[A])$ by a function of $k$.

\noindent \textbf{Claim 7.} There exists an integer $c_k$ that only depends on $k$ such that $$\alpha(G[A]) \leq c_k\left(k-3+(k-1)\binom{2k-2}{k-1}\right).$$

\bigskip

By Claim 2 and by Lemma~\ref{lem:homogeneous}, every connected component of $G[A]$ is $m$-vertex-critical for some integer $m$ for which $1 \leq m < k$. By the induction hypothesis, this means that such a connected component is also $P_4+f(m)P_1$-free and by Theorem~\ref{th:P4PlusLP1ChairFree} there are only finitely many such graphs. Therefore, there exists an integer $c_k$ such that $\alpha(G[H]) \leq c_k$ for each vertex set $H$ that induces a connected component of $G[A]$.

By Claim 3, we have $r \leq k-3$. For each integer $p$ such that $1 \leq p<k$, we have that $\{N(U_i) \cap U_p^*~|~U_i \in \mathcal{U}_p\}$ forms an antichain by Claim 5. Since $|U_p^*| \leq 2(k-1)$ by Claim 6, we conclude that $|\mathcal{U}_p| \leq \binom{2k-2}{k-1}$ by Sperner's theorem~\cite{S28}. Since $p$ can attain $k-1$ values, we also conclude that $s \leq (k-1)\binom{2k-2}{k-1}$. Therefore, we obtain $\alpha(G[A]) \leq c_k\left(k-3+(k-1)\binom{2k-2}{k-1}\right)$, as desired.

\end{proof}

\section{Vertex-critical $(P_5,\text{cricket})$-free graphs}
\label{sec:cricket}
Beaton and Cameron~\cite{BC26} also showed the following result.

\begin{theorem}[\cite{BC26}]
\label{th:P4PlusLP1CricketFree}
    For all integers $k \geq 1$ and $\ell \geq 0$, there are only finitely many $k$-vertex-critical $(P_4+\ell P_1,P_5,\text{cricket})$-free graphs.
\end{theorem}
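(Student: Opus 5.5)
The plan is to bound $|V(G)|$ by a function of $k$ and $\ell$, by induction on $\ell$ (and, where convenient, on $k$). For $\ell=0$ the graph is $P_4$-free. A $k$-vertex-critical cograph is $K_k$, since a cograph or its complement is disconnected, and criticality forces the complement to split into critical joins of cliques. So assume $\ell\geq 1$ and that $G$ is a $k$-vertex-critical $(P_4+\ell P_1,P_5,\text{cricket})$-free graph containing an induced $P_4$, say $P=\{a,b,c,d\}$.

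\textbf{Step 1: vertices anticomplete to $P$.} Let $A$ be the set of vertices anticomplete to $P$. Since $G$ is $P_4+\ell P_1$-free, $\alpha(G[A])\leq \ell-1$, and $\omega(G)\leq k-1$ because $G$ is critical and not complete. Ramsey's theorem then gives $|A|<R(k,\ell)$.

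\textbf{Step 2: vertices with a neighbour in $P$.} Split the remaining vertices into the $15$ classes $S_X=\{x\notin P : N_P(x)=X\}$, where $\emptyset\neq X\subseteq P$. It suffices to bound each $|S_X|$. Since $\omega(G[S_X])\leq k-1$, by Ramsey again it suffices to bound $\alpha(G[S_X])$.

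The key tool is the cricket-freeness applied at a vertex $p\in X$. The neighbourhood $N(p)$ is $(K_2+2P_1)$-free, so for every edge $uv$ inside $N(p)$, the vertices of $N(p)$ anticomplete to $\{u,v\}$ form a clique, hence there are fewer than $k$ of them. Now let $I\subseteq S_X$ be a large independent set. Then $I\subseteq N(p)$, and I would look for an edge inside $N(p)$ that most of $I$ misses. Such an edge can be built from a neighbour $q\in P$ of $p$, together with either another vertex of $N(p)$ or the vertex of $P$ adjacent to both. Using Ramsey on adjacency patterns of $I$ to the vertices of $P$ (which are fixed, since $I\subseteq S_X$), this should give a contradiction in all classes except those where $X$ contains every neighbour of each $p\in X$ on $P$, such as $X=P$ or $X=\{b,c\}$.

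\textbf{Step 3: the remaining classes.} For these I would use Lemma~\ref{lem:incompatible} with singletons. Two nonadjacent vertices $x,y$ of $G$ never satisfy $N(x)\subseteq N(y)$. So for every pair in $I$ there are private neighbours $x'\in N(x)\setminus N(y)$ and $y'\in N(y)\setminus N(x)$. If $x'y'\notin E(G)$, then $P_5$-freeness gives the configuration $x'-x-p-y-y'$ unless $p$ is adjacent to $x'$ or $y'$. Applying Ramsey's theorem to a large $I$ and its witnesses, I aim to extract either an induced $P_4$ together with $\ell$ vertices of $I$ anticomplete to it, or a cricket centred at some vertex adjacent to many vertices of $I$. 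Either outcome is a contradiction. For the first, one can alternatively apply the induction hypothesis on $\ell$ to the non-neighbourhood of a vertex, since the non-neighbourhood of any vertex $v$ is $(P_4+(\ell-1)P_1)$-free.

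I expect this last step to be the main obstacle: the private neighbours may lie anywhere, including in other classes $S_{X'}$ or in $A$, and their adjacencies must be controlled. The first thing I would try is to choose the private neighbours inside the already-bounded set $A\cup P$ whenever possible. This reduces the problem to an antichain argument over subsets of a bounded set, bounded by Sperner's theorem as in the proof of Theorem~\ref{th:chair}. Only pairs whose witnesses lie outside $A\cup P$ would then need the explicit cricket/$P_5$ case analysis above.
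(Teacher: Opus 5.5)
The paper gives no proof of this statement. It is quoted from Beaton and Cameron and used as a black box, so your argument has to stand on its own. It does not: the heart of it, bounding $\alpha(G[S_X])$ within each class, is never established.

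Your base case and Step~1 are fine; the base case is cleanest via perfectness of cographs. The claim in Step~2, however, is false. Take the path $a-b-c-d$ and add pairwise nonadjacent vertices $x_1,\dots,x_m$ adjacent only to $b$. This tree is $P_5$-free and triangle-free, hence cricket-free. It is also $P_4+P_1$-free, because every induced $P_4$ has the form $\{u,b,c,d\}$ and every remaining vertex is adjacent to $b$. Yet $S_{\{b\}}=\{x_1,\dots,x_m\}$ is an arbitrarily large independent set, and $N(b)$ contains no edge, so no cricket can be built there. Making every $x_i$ also adjacent to $a$ gives the same outcome for $X=\{a,b\}$: an unbounded independent class in a $(P_4+P_1,P_5,\text{cricket})$-free graph.

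So for classes you assign to Step~2, no argument using only the forbidden subgraphs can succeed, and criticality is indispensable. In the example, $N(x_i)\subseteq N(a)$ with $x_ia\notin E(G)$ contradicts Lemma~\ref{lem:incompatible}. Two smaller points in Step~2. Since $P$ is an induced path, no vertex of $P$ is adjacent to both $p$ and a neighbour $q$ of $p$. And Ramsey on the adjacencies of $I$ to $P$ yields nothing, because these adjacencies are identical by definition of $S_X$.

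Everything would therefore have to happen in Step~3, which is only a plan. For $x,y\in I$, the private neighbours $x',y'$ may lie in classes $S_{X'}$ that you have not bounded, so their adjacencies are uncontrolled. The configuration $x'y'\in E(G)$ with $p$ anticomplete to $\{x',y'\}$ merely produces an induced $C_5$, which is allowed. You specify no Ramsey colouring that actually forces $P_4+\ell P_1$ or a cricket.

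Your two fallbacks do not close this gap.
\begin{itemize}
\item Choosing witnesses in $A\cup P$ only separates vertices of $I$ with different traces on $A\cup P$. There are at most $2^{|A|}$ such traces, so pigeonhole suffices and Sperner is not needed. The vertices of $I$ sharing one trace, whose witnesses all lie outside $A\cup P$, are exactly the hard case, so this is circular.
\item Induction on $\ell$ cannot be applied to $G-N[v]$. That graph is $(P_4+(\ell-1)P_1)$-free, but the inductive hypothesis only bounds vertex-critical graphs, and $G-N[v]$ need not be vertex-critical. Bounding its critical subgraphs does not bound it.
\end{itemize}

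What is missing is a genuine structural argument that combines criticality with $P_5$- and cricket-freeness to bound an independent set inside a single class $S_X$. Without it, the proposal is a skeleton with the entire difficulty deferred.
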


Before introducing the main theorem of this section, we first require the following technical lemma. Here, $R_r(s)=R(s,s,\ldots,s)$ is the Ramsey number representing the smallest integer $n$ such that every colouring of the edges of
$K_n$ using $r$ colours contains a monochromatic copy of $K_s$.

\begin{lemma}
\label{lem:RamseyCricket}
Let $q \geq 1$ be an integer, let $X$ be a finite set and let $Y$ be a graph with $\chi(Y) \leq q$. For every vertex $y \in Y$, a set $I(y) \subseteq X$ is given such that the following conditions hold:
\begin{itemize}
    \item If $y,y' \in Y$ are nonadjacent, then $I(y)$ and $I(y')$ are comparable.
    \item If $y,y' \in Y$ are adjacent, then $|I(y) \setminus I(y')| \leq 1$ and $|I(y') \setminus I(y)| \leq 1$.
\end{itemize}
For every $x \in X$, define $S_x := \{y \in Y~|~x \in I(y)\}$. If $\{S_x~|~x \in X\}$ forms an antichain, then $|X| \leq R_{q^2}(4)-1$.
\end{lemma}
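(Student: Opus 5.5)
The plan is to argue by contradiction with the pigeonhole-on-pairs idea behind Ramsey's theorem. Assume $|X|\geq R_{q^2}(4)$. Fix a proper colouring $c\colon Y\to\{1,\dots,q\}$ and an arbitrary linear order $x_1<x_2<\dots$ on $X$.

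First I will extract witnesses from the antichain hypothesis. For every pair $x<x'$ in $X$, the sets $S_x$ and $S_{x'}$ are incomparable. So there are $y\in S_x\setminus S_{x'}$ and $y'\in S_{x'}\setminus S_x$; that is, $x\in I(y)$, $x'\notin I(y)$, $x'\in I(y')$ and $x\notin I(y')$. Then $I(y)$ and $I(y')$ are incomparable, so by the first condition $y$ and $y'$ are adjacent, and in particular $c(y)\neq c(y')$. I will fix one such pair of witnesses $(y_{x,x'},y'_{x,x'})$ for every pair. I then colour the edge $\{x,x'\}$ of the complete graph on $X$ with the ordered colour pair $(c(y_{x,x'}),c(y'_{x,x'}))\in\{1,\dots,q\}^2$. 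This uses at most $q^2$ colours, so Ramsey's theorem gives four elements $x_1<x_2<x_3<x_4$ on which all six pairs receive the same colour $(\alpha,\beta)$ with $\alpha\neq\beta$.

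The key structural consequence is that all $\alpha$-witnesses lie in one colour class of $c$, hence are pairwise nonadjacent, hence have pairwise comparable $I$-sets; the same holds for the $\beta$-witnesses. So the $\alpha$-witnesses form a chain under inclusion, and so do the $\beta$-witnesses. Chain comparisons then transfer elements between witnesses. For instance, $y_{x_1,x_4}$ and $y_{x_2,x_4}$ are comparable and neither contains $x_4$, so the larger of the two gives an $\alpha$-vertex $y$ with $x_1,x_2\in I(y)$ and $x_4\notin I(y)$. Symmetrically, comparing $y'_{x_1,x_3}$ and $y'_{x_1,x_4}$ gives a $\beta$-vertex $y'$ with $x_3,x_4\in I(y')$ and $x_1\notin I(y')$. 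Now $x_1\in I(y)\setminus I(y')$ and $x_4\in I(y')\setminus I(y)$, so $I(y)$ and $I(y')$ are incomparable. By the first condition, $y$ and $y'$ are adjacent, and the second condition forces both differences to have size at most one. This yields further memberships: $x_2\in I(y')$ and $x_3\in I(y)$.

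The main obstacle is turning such forced memberships into an actual contradiction, that is, finding an $\alpha$-witness and a $\beta$-witness whose $I$-sets differ in at least two elements in some direction while still being incomparable. I would first try to exploit the chain structure more fully. Within the $\alpha$-chain, $y_{x_i,x_j}$ omits $x_j$ but contains $x_i$, so comparing $y_{x_1,x_2}$ with $y_{x_3,x_4}$, and similar pairs, pins down the order of the chain. For example, the $\alpha$-vertex containing $x_1,x_2,x_3$ but not $x_4$ must sit above $y_{x_1,x_2}$ in the chain. Comparing that order against the reversed order forced on the $\beta$-chain should produce a pair that is ``two apart''. If the fixed orientation of the colouring turns out not to give enough leverage, I would refine the colouring. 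One option is to record in the colour also which of $x,x'$ is the smaller element in the witnesses' $I$-sets. Another is to apply Ramsey's theorem for $4$-sets, which would still fit the bound once the colours are encoded as $q^2$ ordered pairs. The aim in either case is to make the four-point configuration rigid enough that the adjacency condition $|I(y)\setminus I(y')|\leq 1$ fails.
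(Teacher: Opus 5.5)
\textbf{There is a genuine gap: the proposal stops before the contradiction, and the pair of witnesses you construct cannot produce one.}

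Your setup is sound and matches the paper's. Colouring $\{x,x'\}$ by the colour classes of a chosen witness pair is the paper's colouring by some $(i,j)$ with $S_x^i\setminus S_{x'}^i\neq\emptyset$ and $S_{x'}^j\setminus S_x^j\neq\emptyset$. Your remark that the witnesses in one colour class form a chain under inclusion is the paper's observation $(\dagger)$, that each $S_x^i$ is a final segment of an ordering of $Y_i$.

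The larger of $y_{x_1,x_4},y_{x_2,x_4}$ and the larger of $y'_{x_1,x_3},y'_{x_1,x_4}$ are only guaranteed to be separated by $x_1$ and $x_4$. As you show yourself, the second condition then just forces $x_2\in I(y')$ and $x_3\in I(y)$, which is consistent. The proposed continuation (``should produce a pair two apart'', refining the colouring, Ramsey for $4$-sets) is not an argument, and none of it is needed.

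The missing step is to use the witnesses of the middle pair and pin them inside the chains using the consecutive pairs. Let $y=y_{x_2,x_3}$.
\begin{itemize}
\item Since $x_2\in I(y)$ but $x_2\notin I(y_{x_1,x_2})$, comparability forces $I(y_{x_1,x_2})\subseteq I(y)$, so $x_1\in I(y)$.
\item Since $x_3\in I(y_{x_3,x_4})$ but $x_3\notin I(y)$, comparability forces $I(y)\subseteq I(y_{x_3,x_4})$, so $x_4\notin I(y)$.
\end{itemize}
Hence $x_1,x_2\in I(y)$ and $x_3,x_4\notin I(y)$.

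Dually, let $y'=y'_{x_2,x_3}$.
\begin{itemize}
\item Comparing with $y'_{x_3,x_4}$ (which omits $x_3\in I(y')$) gives $I(y'_{x_3,x_4})\subseteq I(y')$, so $x_4\in I(y')$.
\item Comparing with $y'_{x_1,x_2}$ (which contains $x_2\notin I(y')$) gives $I(y')\subseteq I(y'_{x_1,x_2})$, so $x_1\notin I(y')$.
\end{itemize}
Hence $x_3,x_4\in I(y')$ and $x_1,x_2\notin I(y')$.

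As you already noted, a witness pair is adjacent. Yet $|I(y)\setminus I(y')|\geq 2$, which contradicts the second condition. This is exactly the paper's choice $y\in S^i_{x_2}\setminus S^i_{x_3}$ and $y'\in S^j_{x_3}\setminus S^j_{x_2}$. The paper reaches it after deriving the strict chains $S^i_{x_4}\subset S^i_{x_3}\subset S^i_{x_2}\subset S^i_{x_1}$ and $S^j_{x_1}\subset S^j_{x_2}\subset S^j_{x_3}\subset S^j_{x_4}$ from the monochromatic $K_4$.
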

\begin{proof}
Consider a proper $q$-colouring of $Y$ and denote the corresponding colour classes by $Y_1, Y_2, \ldots, Y_q$. Because of the first condition, we may assume that for each $1 \leq i \leq q$ the vertices in $Y_i$ are ordered such that if $y$ comes before $y'$ in this ordering, then $I(y) \subseteq I(y')$. In what follows, we will treat $Y_i$ with respect to this ordering.

For every $x \in X$ and every integer $1 \leq i \leq q$, we define $S_x^i := S_x \cap Y_i$. Note that $S_x^i$ can be obtained by removing zero or more vertices from the beginning of $Y_i$, because if $y,y' \in Y_i$ and $I(y)\subseteq I(y')$ and $x \in I(y)$, then we also have $x \in I(y')$. We shall abbreviate this observation as $(\dagger)$.

Suppose for the sake of obtaining a contradiction that $|X| \geq R_{q^2}(4)$. Let us order the elements of $X$ in an arbitrary way. If $x$ and $x'$ are two distinct elements from $X$, we write $x<x'$ if $x$ comes before $x'$ in this ordering. Let $G$ be a complete graph with vertex set $X$. Let $x, x' \in X$ be two distinct elements such that $x<x'$. Recall that $S_x$ and $S_{x'}$ are incomparable, because $\{S_x~|~x \in X\}$ forms an antichain. Therefore, there exists an integer $i$ such that $S_x^i \setminus S_{x'}^i \neq \emptyset$ and an integer $j$ such that $S_{x'}^j \setminus S_x^j \neq \emptyset$. We colour the edge $xx' \in E(G)$ with colour $(i,j)$. Note that there are at most $q^2$ different colours.

Since $|X| \geq R_{q^2}(4)$, there are four vertices $x_1, x_2, x_3, x_4 \in V(G)$ such that each edge between these four vertices is coloured with the same colour. Let us call this colour $(i,j)$. By the definition of the colours, we have for every $1 \leq s < t \leq 4$ that $S_{x_s}^i \setminus S_{x_t}^i \neq \emptyset$ and $S_{x_t}^j \setminus S_{x_s}^j \neq \emptyset$. By observation $(\dagger)$, we have $S_{x_t}^i \subset S_{x_s}^i$ and also $S_{x_s}^j \subset S_{x_t}^j$. Hence, we have $S_{x_4}^i \subset S_{x_3}^i \subset S_{x_2}^i \subset S_{x_1}^i$ and $S_{x_1}^j \subset S_{x_2}^j \subset S_{x_3}^j \subset S_{x_4}^j$.

Consider a vertex $y \in S_{x_2}^i \setminus S_{x_3}^i$ and a vertex $y' \in S_{x_3}^j \setminus S_{x_2}^j$. Note that we have $x_1, x_2 \in I(y)$, but $x_3, x_4 \notin I(y)$ and $x_3, x_4 \in I(y')$, but $x_1, x_2 \notin I(y')$. Therefore we have $|I(y) \setminus I(y')| \geq 2$ and $|I(y') \setminus I(y)| \geq 2$. This contradicts the two conditions and proves the lemma.
\end{proof}

We are now ready to prove the main theorem of this section. Its proof follows the same strategy as the proof of Theorem~\ref{th:chair}, but the arguments that are necessary to make it work are different because of the constraints that arise from forbidding the $\text{cricket}$ graph instead of the $\text{chair}$ graph.

\begin{theorem}
\label{th:cricket}
    For all integers $k \geq 1$, there are only finitely many $k$-vertex-critical $(P_5,\text{cricket})$-free graphs.
\end{theorem}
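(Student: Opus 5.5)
The plan is to follow the proof of Theorem~\ref{th:chair}. I would show by induction on $k$ that there is an integer $f(k)$ such that every $k$-vertex-critical $(P_5,\text{cricket})$-free graph is $P_4+f(k)P_1$-free. Theorem~\ref{th:P4PlusLP1CricketFree} then gives finiteness. As before, fix an induced path $P=a-b-c-d$, let $A$ be the set of vertices anticomplete to $P$, and aim to bound $\alpha(G[A])$ by a function of $k$.

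\textbf{Step 1 (attachment types).} Redo the analogue of Claim~1 for a vertex $x\notin A\cup P$ with a neighbour $z\in A$, checking the $15$ possibilities for $N_P(x)$ as in Table~\ref{tab:PossibilitiesForNPx}.
\begin{itemize}
\item The singletons, $\{a,b\}$, $\{c,d\}$ and $\{a,d\}$ give an induced $P_5$ exactly as in Table~\ref{tab:PossibilitiesForNPx}.
\item The sets $\{a,b,d\}$, $\{a,c,d\}$ and $\{a,b,c,d\}$ give a cricket. For example, the triangle $x,a,b$ with pendants $z,d$ at $x$ is induced when $N_P(x)=\{a,b,c,d\}$.
\end{itemize}
I expect the surviving types to be $\{a,c\}$, $\{b,d\}$, $\{b,c\}$, $\{a,b,c\}$ and $\{b,c,d\}$. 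In contrast to the chair case, full attachment is now excluded.

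\textbf{Step 2 (homogeneity and criticality of components).} Show that every component of $G[A]$ is homogeneous. Every surviving type has a neighbour $p\in P$ and a non-neighbour $q\in P$ with $pq\in E(G)$. So if $y$ is adjacent to $u$ but not to $v$ for an edge $uv$ inside a component, then $v-u-y-p-q$ is an induced $P_5$. By Lemma~\ref{lem:homogeneous}, each component is $m$-vertex-critical for some $m<k$. By induction and Theorem~\ref{th:P4PlusLP1CricketFree}, each component has independence number at most some constant $c_k$. It therefore remains to bound the number of components.

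\textbf{Step 3 (counting components).} Group the components by $p=\chi(G[U_i])$. As in Claim~4, for fixed $p$ the neighbourhoods $N(U_i)$ form an antichain by Lemma~\ref{lem:incompatible}. I then split further according to which attachment type(s) of vertices of $N(U_i)$ are involved; this gives only a bounded number of classes, so it suffices to bound each class. Within one class, I would apply Lemma~\ref{lem:RamseyCricket} with the following data:
\begin{itemize}
\item $X$ is the set of components in the class;
\item $Y$ is the set of attaching vertices of the relevant type;
\item $I(y)$ is the set of components that $y$ sees;
\item $S_x=N(U_x)\cap Y$.
\end{itemize}
We have $\chi(Y)\le k-1$ because $G$ is vertex-critical, so we may take $q=k-1$. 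The two hypotheses of the lemma are checked as follows.
\begin{itemize}
\item If $y,y'$ are nonadjacent and $I(y),I(y')$ are incomparable, pick components $i\in I(y)\setminus I(y')$ and $j\in I(y')\setminus I(y)$ with vertices $u_i,u_j$. Using a common neighbour $w\in P$, the path $u_i-y-w-y'-u_j$ is an induced $P_5$.
\item If $y,y'$ are adjacent and $I(y)\setminus I(y')$ contains two components $U_i,U_j$, take the triangle $y,y',w$ with $w$ a common neighbour in $P$. The vertices $u_i\in U_i$ and $u_j\in U_j$ are pendants at $y$, giving an induced cricket.
\end{itemize}
To guarantee a common neighbour $w\in P$, I restrict $Y$ to vertices whose types share a vertex of $P$. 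For example, all types except $\{a,c\}$ contain $b$ or $c$, and I would split by type where necessary. Lemma~\ref{lem:RamseyCricket} then bounds each class by $R_{(k-1)^2}(4)-1$, and summing over $p$ and over classes gives the bound.

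\textbf{Main obstacle.} The delicate point is Step 3: the antichain of Claim~4 lives in the full neighbourhoods. These may mix types with no common vertex of $P$, for instance $\{a,c\}$ and $\{b,d\}$, and then neither the $P_5$ argument nor the cricket argument applies directly. I would first try to show that two such vertices attached to the same component yield a forbidden subgraph, or that $\{a,c\}$- and $\{b,d\}$-attachers can be handled via $\{b,c\}$-type vertices. Failing that, I would intersect the antichain with each type separately, in the spirit of Claim~5 (choosing a representative set $Y$ with distinct $I$-values). I would then argue that incomparability survives in at least one type, with a pigeonhole step over the boundedly many type pairs before applying Ramsey.
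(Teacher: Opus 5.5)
Steps 1 and 2 are correct; your single induced $P_5$ $v-u-y-p-q$ is a nice compression of the case check in the paper's Claim 2. The genuine gap is in Step 3, exactly where you flag it.

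\textbf{Where the gap is.} Your check of both hypotheses of Lemma~\ref{lem:RamseyCricket} needs a common $P$-neighbour $w$ of $y$ and $y'$. Every type except $\{a,c\}$ contains $b$, and every type except $\{b,d\}$ contains $c$. So the only bad pair is $L$ (type $\{a,c\}$) versus $R$ (type $\{b,d\}$), and everything hinges on components whose neighbourhoods meet both $L$ and $R$. You leave this unresolved. Your fallback, intersecting with each type and pigeonholing over type pairs, cannot close it. The contradiction in Lemma~\ref{lem:RamseyCricket} comes from a pair $y\in Y_i$, $y'\in Y_j$ with $|I(y)\setminus I(y')|\ge 2$ and $|I(y')\setminus I(y)|\ge 2$. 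If $y\in L$ and $y'\in R$ are adjacent, no vertex of $P$ closes a triangle on $y,y'$, so no cricket forbids this configuration. Also, restricting $Y$ to a single type destroys the antichain property whenever the neighbourhoods in a class mix $L$ and $R$.

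\textbf{What is missing.} You need the paper's Claims 4--5: no component $H$ of $G[A]$ has neighbours in both $L$ and $R$. This is the first option you list, and it is true, but you do not prove it. Suppose $\ell\in L$ and $r\in R$ both have neighbours in $H$. By homogeneity both are complete to $H$; pick $x\in H$.
\begin{itemize}
\item If $\ell r\notin E(G)$, then $a-\ell-x-r-d$ is an induced $P_5$.
\item If $\ell r\in E(G)$, then the triangle $\ell,r,x$ with pendants $a,c$ at $\ell$ is an induced cricket.
\end{itemize}
The paper phrases this as first showing $L$ is complete to $R$ via the $P_5$ $\ell-a-b-r-d$, then using the cricket.

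\textbf{How it closes.} Put $B=M\cup L^+\cup R^+$. With the separation, every component has its neighbourhood inside $L\cup B$, where all types contain $c$, or inside $R\cup B$, where all types contain $b$. Your Ramsey argument then goes through on each of these two classes, with $S_x=N(U_x)$ an antichain by Lemma~\ref{lem:incompatible}.

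The paper handles components touching $B$ more cheaply. Each $u\in B$ sees at most one component, because of the cricket $\{u,x,y,b,c\}$. The chosen attachers are pairwise adjacent, because of a $P_5$ through $b$. Together with $b,c$ they form a clique, so there are at most $k-3$ such components. The paper then applies Lemma~\ref{lem:RamseyCricket} only to components attached purely to $L$ or purely to $R$, using the common neighbour $a$ (respectively its mirror image).
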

\begin{proof}
    Our proof strategy is to show that for all integers $k \geq 1$, there exists an integer $f(k)$ such that every $k$-vertex-critical $(P_5,\text{cricket})$-free graph is also $P_4+f(k) P_1$-free. If we can show this, the theorem follows from Theorem~\ref{th:P4PlusLP1CricketFree}.

    We will prove this statement by induction on $k$. If $k \leq 2$, the statement is trivial. Now let $k \geq 3$ and assume the statement holds for all integers $1 \leq m<k$. Let $G$ be a $k$-vertex-critical $(P_5,\text{cricket})$-free graph. If $G$ is $P_4$-free, the statement trivially holds. 
    
    Otherwise, let $P := \{a,b,c,d\}$ be a 4-vertex subset of $V(G)$ such that $P$ induces the path $a-b-c-d$. Let $A := \{x \in V(G) \setminus P ~|~x\text{ is anticomplete to }P\}$. We will now work towards showing that there exists an integer $f(k)$ such that $\alpha(G[A]) \leq f(k)-1$ through a number of claims (and therefore $G$ is $P_4+f(k)P_1$-free).
    \bigskip

\noindent \textbf{Claim 1.} Let $z \in A$ and let $x \notin A \cup P$ be adjacent to $z$. Then 
$$N_P(x) \in \{\{a,c\},\{b,c\},\{b,d\},\{a,b,c\},\{b,c,d\}\}.$$

\bigskip

Since $x \notin A \cup P$, we have that $N_P(x)$ is nonempty. Hence, there are $15$ possibilities to consider for how $N_P(x)$ looks. As summarized in Table~\ref{tab:PossibilitiesForNPxCricket}, 10 out of 15 possibilities lead to a contradiction. This proves the claim.

\begin{table}[h]
\centering
\begin{tabular}{c|c}
\hline
$N_P(x)$ & Forbidden induced subgraph \\
\hline
$\{a\}$ & $P_5$: $\{z,x,a,b,c\}$\\
$\{b\}$ & $P_5$: $\{z,x,b,c,d\}$\\
$\{c\}$ & $P_5$: $\{z,x,c,b,a\}$\\
$\{d\}$ & $P_5$: $\{z,x,d,c,b\}$\\
$\{a,b\}$ & $P_5$: $\{z,x,b,c,d\}$\\
$\{a,c\}$ & -\\
$\{a,d\}$ & $P_5$: $\{z,x,a,b,c\}$\\
$\{b,c\}$ & -\\
$\{b,d\}$ & -\\
$\{c,d\}$ & $P_5$: $\{z,x,c,b,a\}$\\
$\{a,b,c\}$ & -\\
$\{a,b,d\}$ & $\text{cricket}$: $\{x,z,d,a,b\}$\\
$\{a,c,d\}$ & $\text{cricket}$: $\{x,z,a,c,d\}$\\
$\{b,c,d\}$ & -\\
$\{a,b,c,d\}$ & $\text{cricket}$: $\{x,z,d,a,b\}$\\
\hline
\end{tabular}
\caption{Summary of the 15 cases for $N_P(x)$ in the cricket-free case.}
\label{tab:PossibilitiesForNPxCricket}
\end{table}

\bigskip

\noindent

Define the following sets:
\begin{align*}
L &:= \{x \notin A \cup P~|~N_P(x)=\{a,c\}\},\\
M &:= \{x \notin A \cup P~|~N_P(x)=\{b,c\}\},\\
R &:= \{x \notin A \cup P~|~N_P(x)=\{b,d\}\},\\
L^+ &:= \{x \notin A \cup P~|~N_P(x)=\{a,b,c\}\},\\
R^+ &:= \{x \notin A \cup P~|~N_P(x)=\{b,c,d\}\}.
\end{align*}
Because of Claim 1, every vertex $x \notin A \cup P$ that has a neighbor in $A$ belongs to $L\cup M\cup R\cup L^+\cup R^+$. We now focus on homogeneous sets.

\noindent \textbf{Claim 2.} Every connected component of $G[A]$ is a homogeneous set.

\bigskip

Let $H$ be a vertex set that induces a connected component of $G[A]$. Clearly, each vertex $y \in A \setminus H$ is anticomplete to $H$. If a vertex $y\in V(G)\setminus H$ is mixed on $H$, then since $G[H]$
is connected, there exists an edge $uv\in E(G[H])$ such that $y$ is adjacent
to exactly one of $u$ and $v$. Suppose for the sake of obtaining a contradiction that there exists a vertex $y \in V(G) \setminus A$ such that it distinguishes the edge $uv \in E(G[H])$, say $yu \in E(G)$ and $yv \notin E(G)$. We have $y \notin P$, because $H$ is anticomplete to $P$. Therefore, $y \in L\cup M\cup R\cup L^+\cup R^+$. If $y \in L$, then $\{b,a,y,u,v\}$ induces a $P_5$. If $y \in M$, then $\{a,b,y,u,v\}$ induces a $P_5$. If $y \in R$, then $\{c,d,y,u,v\}$ induces a $P_5$. If $y \in L^+$, then $\{d,c,y,u,v\}$ induces a $P_5$. If $y \in R^+$, then $\{a,b,y,u,v\}$ induces a $P_5$. All cases lead to a contradiction and this proves the claim.

\bigskip

\noindent

We will now focus on bounding the number of connected components of $G[A]$. Let $B := M\cup L^+\cup R^+$. Let $B_1, B_2, \ldots, B_r$ be all vertex sets that induce a connected component of $G[A]$ that have some neighbor in $B$. We now prove the following claim.

\noindent \textbf{Claim 3.} We have $r \leq k-3$.

\bigskip

Note that every vertex in $B$ is adjacent to both $b$ and $c$. Moreover, if there is a vertex $u \in B$, then $u$ is adjacent to at most one set $B_i$, for some integer $1 \leq i \leq r$. Indeed, if $u$ would be adjacent to $B_i$ and $B_j$ ($i \neq j$), then let $x \in B_i$ and $y \in B_j$. Now $\{u,x,y,b,c\}$ induces a cricket.

For each integer $i$ for which $1 \leq i \leq r$, let $b_i \in B_i$ be a vertex that has a neighbor $b_i'\in B$. If there exist two distinct integers $i$ and $j$ such that $b_i'b_j' \notin E(G)$, then $\{b_i,b_i',b,b_j',b_j\}$ induces a $P_5$. Hence, $\{b,c\} \cup \{b_i'~|~1 \leq i \leq r\}$ induces a clique. Since $G$ is a $k$-vertex-critical graph that contains an induced $P_4$, we have that $G$ is not itself a clique and therefore $r+2 \leq k-1$. This proves the claim.

\bigskip

\noindent

We now consider the remaining connected components of $G[A]$ (i.e., the connected components that have no neighbor in $B$). Since $G$ is connected, every connected component of $G[A]$ has at least one neighbor outside of $A$. Hence, every remaining connected component has all its neighbors in $L \cup R$. We now focus on $L$ and $R$.

\bigskip

\noindent \textbf{Claim 4.} The set $L$ is complete to the set $R$.

\bigskip

Suppose for the sake of obtaining a contradiction that $\ell \in L$ and $r \in R$ are vertices such that $\ell r \notin E(G)$. Then
$\{\ell,a,b,r,d\}$ induces a $P_5$. This leads to a contradiction and proves the claim.

\bigskip

The next claim focuses on the interaction between connected components of $G[A]$ and the sets $L$ and $R$.

\noindent \textbf{Claim 5.} No connected component of $G[A]$ has both a neighbor in $L$ and a neighbor in $R$.

\bigskip

Let $H$ be a vertex set that induces a connected component of $G[A]$. Suppose for the sake of obtaining a contradiction that $x \in H$ has a neighbor $\ell \in L$ and $y \in H$ has a neighbor $r \in R$. By Claim 2, we also have that $x$ is a neighbor of $r$. By Claim 4, we have $\ell r \in E(G)$ and therefore $\{\ell,a,c,r,x\}$ induces a cricket. This leads to a contradiction and proves the claim.

\bigskip

\noindent

By Claim 5, every remaining connected component has all its neighbors in $L$ or all its neighbors in $R$. Let $L_1, L_2, \ldots, L_s$ be all vertex sets that induce a connected component of $G[A]$ that have no neighbor in $B$ such that for each $1 \leq i \leq s$, we have $N(L_i) \subseteq L$. Similarly, let $R_1, R_2, \ldots, R_t$ be all vertex sets that induce a connected component of $G[A]$ that have no neighbor in $B$ such that for each $1 \leq i \leq t$, we have $N(R_i) \subseteq R$. By symmetry, it suffices to bound $s$ from above by a function of $k$. 

Let $p$ be an integer such that $1 \leq p<k$ and define $\mathcal{L}_p := \{L_i ~|~ 1 \leq i \leq s\text{ and }\chi(G[L_i])=p\}$. We now show that the neighborhoods of components in $\mathcal{L}_p$ form an antichain.

\noindent \textbf{Claim 6.} Let $1 \leq p <k$ be an integer. Then $\{N(L_i)~|~L_i \in \mathcal{L}_p\}$ forms an antichain.

\bigskip

Suppose for the sake of obtaining a contradiction that there exist distinct sets $L_i, L_j \in \mathcal{L}_p$ such that $N(L_i) \subseteq N(L_j)$. Since $L_i$ and $L_j$ induce a different connected component of $G[A]$, we have that $L_i$ is anticomplete to $L_j$. Since $N(L_i)\subseteq N(L_j)$, every vertex of $N(L_i)$ has a neighbor in $L_j$. As $L_j$ is homogeneous due to Claim 2, every vertex of $N(L_i)$ is complete to $L_j$. By the definition of $\mathcal{L}_p$, we have $\chi(G[L_i])=\chi(G[L_j])=p$. Hence, applying Lemma~\ref{lem:incompatible} proves the claim.
\bigskip

\noindent

For each vertex $\ell \in L$ and each integer $1 \leq p<k$, define the set 
$$I_p(\ell) := \{i~|~1 \leq i \leq s\text{ and }N(\ell) \cap L_i\text{ is nonempty and }L_i \in \mathcal{L}_p\},$$ 
indicating in which sets $L_i \in \mathcal{L}_p$ the vertex $\ell$ has a neighbor. By Claim 2, every vertex $\ell \in L$ is complete to $\bigcup_{i \in I_p(\ell)}L_{i}$. For each integer $1 \leq p<k$, let $L_p^* \subseteq L$ be a maximal set such that $I_p(a) \neq I_p(b)$ for all distinct $a, b \in L_p^*$. We now show the existence of a slightly modified antichain in comparison with Claim 6.

\noindent \textbf{Claim 7.} Let $1 \leq p <k$ be an integer. Then $\{N(L_i) \cap L_p^*~|~L_i \in \mathcal{L}_p\}$ forms an antichain.

\bigskip

By Claim 6, for each two distinct $L_i, L_j \in \mathcal{L}_p$, there exist vertices $\ell_i' \in N(L_i)$ and $\ell_j' \in N(L_j)$ such that $\ell_i' \notin N(L_j)$ and $\ell_j' \notin N(L_i)$. By the definition of $L_p^*$, there exist distinct vertices $\ell_i'' ,\ell_j''\in L_p^*$ such that $I_p(\ell_i')=I_p(\ell_i'')$ and  $I_p(\ell_j')=I_p(\ell_j'')$ and therefore $\ell_i'' \in N(L_i)$, $\ell_j'' \in N(L_j)$, $\ell_i'' \notin N(L_j)$ and $\ell_j'' \notin N(L_i)$. This means that $N(L_i) \cap L_p^*$ and $N(L_j) \cap L_p^*$ are incomparable. Hence, $\{N(L_i) \cap L_p^*~|~L_i \in \mathcal{L}_p\}$ forms an antichain and this proves the claim.
\bigskip

\noindent

We now focus on how $I_p(\ell)$ and $I_p(\ell')$ are related for two vertices $\ell,\ell'\in L_p^*$.

\bigskip

\noindent \textbf{Claim 8.} Let $1 \leq p<k$ be an integer. If $\ell,\ell'\in L_p^*$ are nonadjacent, then $I_p(\ell)$ and $I_p(\ell')$ are comparable.

\bigskip

Suppose for the sake of obtaining a contradiction that there exist integers $i\in I_p(\ell)\setminus I_p(\ell')$ and $j\in I_p(\ell')\setminus I_p(\ell)$. Let $x_i\in L_i$ and $x_j\in L_j$. Now $\{x_i, \ell, a, \ell',x_j\}$ induces a $P_5$. This leads to a contradiction and proves the claim.

\bigskip

\noindent \textbf{Claim 9.} Let $1 \leq p<k$ be an integer. If $\ell,\ell'\in L_p^*$ are adjacent, then $|I_p(\ell) \setminus I_p(\ell')| \leq 1$ and $|I_p(\ell') \setminus I_p(\ell)| \leq 1$.

\bigskip

The two inequalities are symmetric, so we focus on proving the first one. Suppose for the sake of obtaining a contradiction that $I_p(\ell) \setminus I_p(\ell')$ contains two distinct integers $i$ and $j$. Let $x_i\in L_i$ and $x_j\in L_j$. Now $\{\ell,x_i,x_j,\ell',a\}$ induces a cricket. This leads to a contradiction and proves the claim.

\bigskip

\noindent

By combining the previous claims appropriately, we can finally bound $\alpha(G[A])$ by a function of $k$.

\noindent \textbf{Claim 10.} There exists an integer $c_k$ that only depends on $k$ such that 
$$\alpha(G[A]) \leq c_k\left(k-3+2(k-1)(R_{(k-1)^2}(4)-1)\right).$$

\bigskip

By Claim 2 and by Lemma~\ref{lem:homogeneous}, every connected component of $G[A]$ is $m$-vertex-critical for some integer $m$ for which $1 \leq m < k$. By the induction hypothesis, this means that such a connected component is also $P_4+f(m)P_1$-free and by Theorem~\ref{th:P4PlusLP1CricketFree} there are only finitely many such graphs. Therefore, there exists an integer $c_k$ such that $\alpha(G[H]) \leq c_k$ for each vertex set $H$ that induces a connected component of $G[A]$.

By Claim 3, we have $r \leq k-3$. Fix an integer $p$ such that $1 \leq p<k$. By combining Claim 7, Claim 8 and Claim 9, we can apply Lemma~\ref{lem:RamseyCricket} with $X=\{i~|~L_i\in\mathcal{L}_p\}$, $Y=G[L_p^*]$ and $q=k-1$. Therefore, we obtain $|\mathcal{L}_p|\leq R_{(k-1)^2}(4)-1$. Since $p$ can attain at most $k-1$ values, we obtain $s \leq (k-1)(R_{(k-1)^2}(4)-1)$. Similarly, we also obtain $t \leq (k-1)(R_{(k-1)^2}(4)-1)$. Hence, the number of connected components of $G[A]$ is at most $k-3+2(k-1)(R_{(k-1)^2}(4)-1)$. Since every connected component of $G[A]$ has independence number at most $c_k$, we obtain $\alpha(G[A]) \leq c_k\left(k-3+2(k-1)(R_{(k-1)^2}(4)-1)\right)$.

\end{proof}

\vspace{-0.5cm}
\section{Characterizations}
\label{sec:characterizations}
Let $k \geq 1$ be an integer and let $\mathcal{F}$ be a family of graphs. In~\cite{HKLSS10}, Ho\`{a}ng et al. presented a recursive algorithm having the property that, if it terminates, it outputs all (finitely many) $k$-vertex-critical $\mathcal{F}$-free graphs. Later, several papers~\cite{GS18,GJORS26,XJGH25a,XJGH25b} extended this algorithm by further generalizing it and improving its efficiency. We refer the interested reader to the aforementioned references for a complete description of this algorithm and to~\cite{J25} for a broader overview of computer-assisted 
methods in graph theory. We characterized all $5$-vertex-critical $(P_5,\text{chair})$-free graphs, all $5$-vertex-critical $(P_5,\text{cricket})$-free graphs and all $6$-vertex-critical $(P_5,\text{cricket})$-free graphs by running this algorithm (i.e., the algorithm terminated in all these cases). These graphs are summarized in Table~\ref{tab:counts} and also made available on House of Graphs~\cite{CDG23}:
\begin{center}
\url{https://houseofgraphs.org/meta-directory/critical-h-free}.
\end{center}
\begin{table}[htbp]
    \centering
    \footnotesize
    \setlength{\tabcolsep}{4pt}
    \renewcommand{\arraystretch}{1.15}
    \begin{tabular}{|c|C{0.26\textwidth}|C{0.26\textwidth}|C{0.26\textwidth}|}
        \hline
        $n$ &
        \makecell{\# $5$-vertex-critical\\ $(P_5,\text{chair})$-free\\ graphs} &
        \makecell{\# $5$-vertex-critical\\ $(P_5,\text{cricket})$-free\\ graphs} &
        \makecell{\# $6$-vertex-critical\\ $(P_5,\text{cricket})$-free\\ graphs} \\
        \hline
        5 & 1 & 1 & 0\\
        6 & 0 & 0 & 1\\
        7 & 1 & 1 & 0\\
        8 & 7 & 7 & 1\\
        9 & 193 & 191 & 7\\
        10 & 3 & 2 & 192\\
        11 & 0 & 0 & 19,473\\
        12 & 0 & 0 & 222\\
        13 & 0 & 0 & 7\\
        \hline
        Total & 205 & 202 & 19,903\\
        \hline
    \end{tabular}
    \vskip12pt
    \caption{An overview of the number of pairwise non-isomorphic $5$-vertex-critical $(P_5,\text{chair})$-free graphs, $5$-vertex-critical $(P_5,\text{cricket})$-free graphs and $6$-vertex-critical $(P_5,\text{cricket})$-free graphs.}
    \label{tab:counts}
\end{table}

\vspace{-0.6cm}
\section*{Acknowledgements}
\noindent The author is supported by a Postdoctoral Fellowship of the Research Foundation Flanders (FWO) with grant number 1222524N. The author is also grateful to Jan Goedgebeur for making the graphs from Section~\ref{sec:characterizations} available on House of Graphs.

	%\newpage	

\end{document}